\newtheorem{thm}{Theorem}[section]
\newtheorem{prop}[thm]{Proposition}
\newtheorem{lem}[thm]{Lemma}
\theoremstyle{remark}
\theoremstyle{definition}
\newcommand{\bN}{\mathbb N}
\DeclareMathOperator{\Map}{Map}
\newcommand{\Q}{\mathbb{ Q}}
\newcommand\norm{\bBigg@{0.8}}
\newcommand{\inparens}[2][flex]{\csname #1l\endcsname(#2%
  \csname #1r\endcsname)\mathclose{}}
\newcommand{\inangles}[2][flex]{\csname #1l\endcsname\langle#2%
  \csname #1r\endcsname\rangle\mathclose{}}
 \DeclareMathOperator{\cl}{cl}
\newcommand{\sv}[2][flex]{\csname #1l\endcsname\|#2%
  \csname #1r\endcsname\|} 
\newcommand{\lone}[2][flex]{\csname #1l\endcsname\|#2%
  \csname #1r\endcsname\|_1} 
\newcommand{\supn}[2][flex]{\csname #1l\endcsname\|#2%
  \csname #1r\endcsname\|_{\infty}} 
\title[On stability of non-domination under taking products]
      {On stability of non-domination\\ under taking products}
\author{D.~Kotschick}
\address{Mathematisches Institut, {\smaller LMU} M\"unchen,
Theresienstr.~39, 80333~M\"unchen, Germany}
\email{dieter@member.ams.org}
\author{C.~L\"oh}
\address{Fakult\"at f\"ur Mathematik, Universit\"at Regensburg, 93040 Regensburg, Germany}
\email{clara.loeh@mathematik.uni-regensburg.de}
\author{C.~Neofytidis}
\address{Department of Mathematical Sciences, {\smaller SUNY} Binghamton, Binghamton, NY 13902-6000, USA}
\email{chrisneo@math.binghamton.edu}
\date{July 6, 2015; \copyright{\ D.~Kotschick, C.~L\"oh and C.~Neofytidis 2015}}
\subjclass[2000]{57N65, 55M25}
\thanks{The first author is grateful to M.~Gromov for a long
  discussion several years ago, which inspired this note. The second
  author was supported by the CRC~1085 \emph{Higher Invariants}
  (Universit\"at Regensburg, funded by the DFG).  The third author is
  grateful to S.~Wang for useful conversations.}
\begin{document}

\begin{abstract}
We show that non-domination results for targets that are not dominated by products are stable under Cartesian products.
\end{abstract}

\maketitle

\section{Motivation}

If $M$ and $N$ are closed oriented manifolds of the same dimension, we say that \emph{$M$ dominates $N$}, and we write $M \geq N$, 
if there is a continuous map $f\colon M\longrightarrow N$ of non-zero degree. The existence of such a dominant map is a property 
of the homotopy types of $M$ and $N$, and it has been known since the pioneering work of Hopf~\cite{Hopf} that for such a map~$f$ 
the pullback $f^*$ is an injection of rational cohomology algebras, and that $f_*$ is virtually surjective on the fundamental group. 
However, the existence of an injective algebra homomorphism~$H^*(N;\Q) \longrightarrow H^*(M;\Q)$ and of a virtually surjective 
homomorphism~$\pi_1(M) \longrightarrow \pi_1(N)$ is usually far from sufficient for~$M \geq N$. 

Motivated by the work of Gromov~\cite{Gromov,gromovmetric} in particular, (non-)domination between manifolds has in recent
years been studied in several different contexts, using a variety of techniques from topology, geometry, and group theory; see for 
example~\cite{Gromov,CT,gromovmetric,connellfarb,KL} and the references given there.
An idea due to Thurston~\cite{MT} and Gromov~\cite{Gromov} 
is to study numerical invariants $I$ of manifolds that are monotone under maps of non-zero degree, 
so that $M\geq N$ implies $I(M)\geq I(N)$. Then, whenever one can compute or estimate $I$ and 
prove $I(M)<I(N)$ for some specific manifolds, one concludes that $M$ does not dominate $N$. The simplest example 
of such an invariant is the cuplength in rational cohomology, which is montone by the result of Hopf mentioned before.
A more subtle monotone invariant -- of geometric rather than algebraic origin -- is the simplicial volume $\sv {\ \cdot \ } $ defined by Gromov~\cite{Gromov}.
In general, monotone invariants are closely connected to functorial semi-norms on homology~\cite{gromovmetric,crowleyloeh,loeh}. 

According to Gromov, the simplicial volume has a major deficiency: its lack of multiplicativity. In fact, he proved in~\cite{Gromov} 
that the simplicial volume is approximately multiplicative for Cartesian products, and it is known that it is not strictly 
multiplicative~\cite{bucherkarlsson}. 
However, approximate multiplicativity is not good enough to obtain stable non-domination results. 
Indeed, suppose that $0<\sv M<\sv N$ for some specific $M$ and $N$. Then $M\ngeq N$, but it is unclear 
whether the $d$-fold product $M^{\times d}$ may dominate~$N^{\times d}$ for some~$d \geq 2$, or not. 
The approximate multiplicativity does not rule out the possibility that, as a function of the number of factors, 
the simplicial volume of direct products of~$M$ might grow faster than that of direct products of~$N$, 
so that the former eventually surpasses the latter. 

Invariants that are strictly multiplicative 
-- or strictly additive, like the cuplength -- do not have this deficiency: if $I(M)<I(N)$, then
$I(M^{\times d})<I(N^{\times d})$, so that $M^{\times d}\ngeq N^{\times d}$ for all $d\geq 1$. In this case the non-domination 
result $M\ngeq N$ is stable under Cartesian products.

Gromov~\cite{gromovmetric} suggested that many manifolds $N$ might have the property that they cannot be dominated 
by a non-trivial product $M=M_1\times M_2$. This conjecture has since been verified~\cite{KL}, and there are now lots of 
examples of manifolds that are known not to be dominated by products~\cite{KL,KL2,KN,Neo}. We will see here that
in general non-domination results for targets that cannot be dominated by products are stable under Cartesian products.
This is interesting in its own right, and also has geometric applications~\cite{Neo}.

\subsection*{Conventions}
Throughout this paper, the word manifold means a connected closed oriented non-empty topological manifold; 
we denote the rational fundamental class of a manifold~$M$ by~$[M]$. 
A product of manifolds is always a non-trivial product, so no factor is a point. 

\section{Results}

Our first result is that for targets that are not dominated by products, the loss of information in taking products 
discussed in the previous section does not occur.
\begin{thm}\label{t:main1}
Suppose $M$ and $N$ are $n$-manifolds, and that $N$ is not dominated by a product. 
Then for any $d \geq 2$ we have $M^{\times d}\geq N^{\times d}$ if and only if $M\geq N$.
\end{thm}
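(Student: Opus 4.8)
The plan is to prove the two implications separately: ``$M\geq N\Rightarrow M^{\times d}\geq N^{\times d}$'' directly, and its converse by a K\"unneth computation with the pulled-back fundamental coclass, invoking the hypothesis on $N$ only at the very end. The first implication uses nothing about $N$: if $f\colon M\to N$ has degree $\ell\neq 0$, then $f^{\times d}\colon M^{\times d}\to N^{\times d}$ has degree $\ell^{d}\neq 0$.

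For the converse I would take a map $F\colon M^{\times d}\to N^{\times d}$ of degree $k\neq 0$ and try to manufacture a degree-nonzero map $M\to N$. Fix $\omega\in H^{n}(N;\Q)$ with $\langle\omega,[N]\rangle=1$; with the projections $\pi_{i}\colon N^{\times d}\to N$, the product $\pi_{1}^{*}\omega\cup\dots\cup\pi_{d}^{*}\omega$ evaluates to $1$ on $[N^{\times d}]$. Setting $g_{i}=\pi_{i}\circ F$ and $\theta_{i}=g_{i}^{*}\omega\in H^{n}(M^{\times d};\Q)$, functoriality of the cup product gives $\theta_{1}\cup\dots\cup\theta_{d}=k\cdot\zeta$, where $\zeta\in H^{nd}(M^{\times d};\Q)$ evaluates to $1$ on $[M^{\times d}]$; in particular $\theta_{1}\neq 0$. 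Using the K\"unneth isomorphism $H^{*}(M^{\times d};\Q)\cong H^{*}(M;\Q)^{\otimes d}$, write $\theta_{1}=\sum_{\underline p}\theta_{1}^{\underline p}$, the sum over multidegrees $\underline p=(p_{1},\dots,p_{d})$ with $p_{1}+\dots+p_{d}=n$ and $\theta_{1}^{\underline p}\in H^{p_{1}}(M;\Q)\otimes\dots\otimes H^{p_{d}}(M;\Q)$. If some $\theta_{1}^{\underline p}$ is non-zero with $\underline p$ concentrated in a single coordinate $j$ (that is, $p_{j}=n$ and $p_{i}=0$ for $i\neq j$), then pulling back along the slice inclusion $\iota_{j}\colon M\hookrightarrow M^{\times d}$ (the $j$-th coordinate, fixed basepoints in the others) annihilates every other K\"unneth summand, so $(g_{1}\circ\iota_{j})^{*}\omega=\iota_{j}^{*}\theta_{1}\neq 0$ in $H^{n}(M;\Q)$; hence $g_{1}\circ\iota_{j}\colon M\to N$ has non-zero degree and $M\geq N$.

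So I am left with the case that every non-zero $\theta_{1}^{\underline p}$ is spread over at least two coordinates, and here the hypothesis on $N$ has to be used. Fix such a $\underline p$ and put $S=\{i:p_{i}>0\}$, so $|S|\geq 2$ and $p_{i}\geq 1$ for $i\in S$. The non-zero class $\theta_{1}^{\underline p}\in\bigotimes_{i\in S}H^{p_{i}}(M;\Q)$ is, by the universal coefficient theorem, a non-zero multilinear form on $\prod_{i\in S}H_{p_{i}}(M;\Q)$. Since each $p_{i}\geq 1$, a classical theorem of Thom (the homomorphism $\Omega^{SO}_{*}(M)\to H_{*}(M;\Q)$ is onto) lets me realise a non-zero multiple of any rational homology class of $M$ by a map from a connected closed oriented manifold of the appropriate dimension; so I can choose connected closed oriented $p_{i}$-manifolds $C_{i}$ and maps $f_{i}\colon C_{i}\to M$ ($i\in S$) on which the above form does not vanish. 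Assemble them into $\Phi\colon\prod_{i\in S}C_{i}\to M^{\times d}$, using $f_{i}$ in coordinate $i\in S$ and basepoints in the remaining coordinates. Then $\deg(g_{1}\circ\Phi)=\langle\theta_{1},\Phi_{*}[\prod_{i\in S}C_{i}]\rangle$ picks out precisely the $\underline p$-component of $\theta_{1}$ and is therefore non-zero. But $\prod_{i\in S}C_{i}$ is a non-trivial product of $|S|\geq 2$ closed oriented manifolds, so this exhibits $N$ as dominated by a product, contradicting the hypothesis. Hence this case is vacuous, and $M\geq N$.

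The hard part is exactly this dichotomy: one has to see that if the pulled-back coclass $\theta_{1}$ fails to localise on a single $M$-factor, this is not a mere cohomological artefact but genuinely forces a product to dominate $N$. What makes it go through is that the K\"unneth components of $\theta_{1}$ are honest non-vanishing multilinear forms on rational homology, together with the fact that every rational homology class of $M$ is, up to a non-zero multiple, carried by a map from a closed oriented manifold; splicing the resulting manifolds into one product and restricting $g_{1}$ to it turns ``$\theta_{1}$ is spread over $|S|$ coordinates'' directly into ``$N$ is dominated by a product of $|S|$ factors''.
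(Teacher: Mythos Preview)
Your proof is correct and follows essentially the same strategy as the paper: compose with a projection $N^{\times d}\to N$, use the K\"unneth decomposition in degree~$n$, and invoke Thom's representability theorem to show that any mixed K\"unneth component would produce a non-trivial product dominating~$N$, forcing a pure component and hence a non-zero degree map $M\to N$ via a slice inclusion. The only differences are cosmetic: the paper works on the homology side (showing $f_*$ is surjective in degree~$n$ and kills mixed tensors), isolates the Thom step as a standalone lemma (Lemma~\ref{l:Thom}) that it then reuses for the other theorems, and does not spell out the cup-product computation $\theta_1\cup\dots\cup\theta_d=k\cdot\zeta$ since surjectivity of $f_*$ already gives $\theta_1\neq 0$ dually.
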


In a similar spirit, taking Cartesian products with arbitrary manifolds preserves non-domination for targets that
are not dominated by products.
\begin{thm}\label{t:main2}
Suppose $M$ and $N$ are $n$-manifolds, and that $N$ is not dominated by a product. Then for any manifold $W$, 
we have $M\times W\geq N\times W$ if and only if $M\geq N$.
\end{thm}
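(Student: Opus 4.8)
The plan is to prove the non-trivial implication $M\times W\geq N\times W\Rightarrow M\geq N$; the reverse implication is immediate, since a map $f\colon M\to N$ of degree $e\neq 0$ gives $f\times\operatorname{id}_W\colon M\times W\to N\times W$, again of degree $e$.

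So assume $F\colon M\times W\to N\times W$ has non-zero degree, and put $g:=p_N\circ F\colon M\times W\to N$ for the projection $p_N$. For $w\in W$ the restriction $g_w\colon M\to N$ of $g$ to the slice $M\times\{w\}$ has a degree $d:=\deg g_w$ that does not depend on $w$, because $W$ is connected; if $d\neq 0$, then $g_w$ already exhibits $M\geq N$. So everything hinges on ruling out $d=0$, and I expect this to be the main point of difficulty.

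The key input is Thom's theorem — every class in $H_k(X;\Q)$ is a rational multiple of $h_*[V]$ for some map $h$ from a closed oriented $k$-manifold $V$ — used together with the hypothesis on $N$. Writing $H^*(M\times W;\Q)=H^*(M;\Q)\otimes H^*(W;\Q)$ and letting $[N]^\vee\in H^n(N;\Q)$ be the class dual to $[N]$, I would first show that $g^*[N]^\vee$ has no component in $H^i(M;\Q)\otimes H^j(W;\Q)$ with $i,j\geq 1$: pairing such a component against $\alpha_*[A]\otimes\beta_*[B]$, for suitable closed oriented $A^i$, $B^j$ and maps $\alpha\colon A\to M$ and $\beta\colon B\to W$, would make $g\circ(\alpha\times\beta)\colon A\times B\to N$ a degree-non-zero map from a non-trivial product, contradicting that $N$ is not dominated by a product. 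Since the $H^n(M;\Q)\otimes H^0(W;\Q)$-component of $g^*[N]^\vee$ equals $d\cdot[M]^\vee$, the case $d=0$ leaves $g^*[N]^\vee=1\otimes\delta$ with $\delta:=\phi^*[N]^\vee$, where $\phi:=g|_{\{m_0\}\times W}\colon W\to N$ is a co-slice. Expanding $\deg F=\langle\, g^*[N]^\vee\smile(p_W\circ F)^*[W]^\vee,\ [M]\times[W]\,\rangle$ then gives $\deg F=\pm\langle\delta\smile\eta,[W]\rangle$, where $[M]^\vee\otimes\eta$ is the $H^n(M;\Q)\otimes H^{w-n}(W;\Q)$-component of $(p_W\circ F)^*[W]^\vee$; in particular $\delta\neq 0$, so by Poincaré duality in $N$ the ring homomorphism $\phi^*$ is injective. (For Theorem~\ref{t:main1} the analogous step is easier: there every surviving component is an honest slice restriction, so $d=0$ would give $g^*[N]^\vee=0$ and contradict injectivity of $g^*$ at once.)

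To conclude when $d=0$ I must manufacture a slice $M\to N$ of non-zero degree. Using Thom once more, choose a closed oriented $C^{w-n}$ and a map $j\colon C\to W$ representing a rational multiple of the Poincaré dual of $\delta$; then $(p_W\circ F)\circ(\operatorname{id}_M\times j)\colon M\times C\to W$ has non-zero degree, and composing it with $\phi$ and restricting to a slice $M\times\{c\}$ produces a map $M\to N$ of degree $\langle\delta,\theta\rangle$, where $\theta:=(p_W\circ F)_*([M]\times[\mathrm{pt}])\in H_n(W;\Q)$. If this pairing is non-zero, we are done. If it vanishes, then $\phi\circ(p_W\circ F)\circ(\operatorname{id}_M\times j)\colon M\times C\to N$ has all $M$-slices of degree zero while still inducing an injection on rational cohomology, so the argument of the previous paragraph applies with $W$ replaced by the lower-dimensional $C$; since $\dim C=\dim W-n$, iterating pushes the dimension below $n$, where the vanishing conclusion is absurd (it would make $[N]^\vee$ pull back to zero under an injective ring homomorphism). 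This settles the case $\dim W<2n$. The main obstacle is the remaining case $\dim W\geq 2n$: after one descent step there is no longer a degree-non-zero map into the new factor, so the induction does not obviously restart; I expect that closing this gap requires bringing in further information — most plausibly the virtual surjectivity of $F$ on fundamental groups, equivalently a cohomological-dimension constraint — which the cohomological argument above does not use.
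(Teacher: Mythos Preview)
Your reduction is sound up through the point where you show that, under the assumption~$d=0$, one has $g^*[N]^\vee = 1\otimes\delta$ with~$\delta\neq 0$; this is the cohomological dual of the paper's Lemma~\ref{l:Thom}, and your treatment of the case $\dim W<2n$ via one descent step is correct. But the gap you flag for $\dim W\geq 2n$ is real, and your diagnosis of its source is accurate: after replacing $W$ by~$C$ you have only a map $M\times C\to N$, not a map $M\times C\to N\times C$, so there is no second projection with which to restart the argument. Your guess that fundamental-group information is needed to close it, however, is off the mark---the paper's proof is entirely rational-homological.

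The idea you are missing is to exploit that $W$ occurs as a factor on \emph{both} sides of~$F$, which allows one to \emph{iterate}~$F_*$ rather than descend in dimension. Dualize back to homology and write the K\"unneth decomposition as
\[
F_*\colon H_n(M;\Q)\oplus\mathcal M_1\oplus H_n(W;\Q)\longrightarrow H_n(N;\Q)\oplus\mathcal M_2\oplus H_n(W;\Q),
\]
where $\mathcal M_i$ denotes the mixed tensor summands. Your Thom argument gives $F_*(\mathcal M_1)\subset\mathcal M_2\oplus H_n(W;\Q)$, and the hypothesis $M\ngeq N$ gives the same for $F_*(H_n(M;\Q))$. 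Set $Q:=H_n(N\times W;\Q)/F_*\bigl(H_n(M;\Q)\oplus\mathcal M_1\bigr)$; it is finite-dimensional, and $[N]\neq 0$ in~$Q$. Surjectivity of~$F_*$ produces $\alpha_0\in H_n(W;\Q)$ with $F_*(\alpha_0)=[N]$ in~$Q$. Now regard $\alpha_0$ as sitting in the \emph{target} summand $H_n(W;\Q)$: surjectivity again yields $\alpha_1\in H_n(W;\Q)$ with $F_*(\alpha_1)=\alpha_0$ in~$Q$, and inductively $\alpha_{i+1}$ with $F_*(\alpha_{i+1})=\alpha_i$ in~$Q$. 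Each $\alpha_i$ is non-zero in~$Q$, so by finite-dimensionality there is a minimal $k$ with $\sum_{i=0}^k\lambda_i\alpha_i=0$ in~$Q$ and $\lambda_k\neq 0$. Applying $F_*$ to this relation (viewed in the source) shifts the indices down by one and produces an extra $\lambda_0[N]$ term; if $\lambda_0=0$ this contradicts minimality of~$k$, and if $\lambda_0\neq 0$ it exhibits $[N]$ as lying in $\mathcal M_2\oplus H_n(W;\Q)$, contradicting the K\"unneth splitting.

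The point is that the same copy of $H_n(W;\Q)$ appears in domain and codomain, so $F_*$ followed by the projection to~$Q$ behaves like an endomorphism on which one can do linear algebra; no geometric replacement of~$W$ is needed, and the argument is uniform in~$\dim W$.
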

Note that $W$ may very well have trivial simplicial volume. Even if one deduces $M\ngeq N$ from 
$\sv M<\sv N$, this theorem shows that multiplying with $W$ preserves non-do\-mi\-na\-tion, 
while killing the simplicial volume if~$\sv W =0$.

Finally, controlling the dimensions of the factors in a product, we have the following:
\begin{thm}\label{t:finiteprod}
Let $N$ be an $n$-manifold that is not dominated by a product. Then there is no manifold 
$V$ for which the product $N\times V$ can be dominated by a product $P=X_1\times\ldots\times X_s$ that satisfies 
$\dim X_j<n$ for all $j \in \{1,\dots,s\}$.
\end{thm}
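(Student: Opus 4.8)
The plan is to argue by contradiction, reducing the statement to a purely cohomological assertion about products. Suppose there were a manifold $V$, of dimension $m$, and a map $f\colon P=X_1\times\cdots\times X_s\to N\times V$ of non-zero degree with $\dim X_j<n$ for every $j$; note that $\dim P=n+m$. Let $p_N\colon N\times V\to N$ be the projection, set $g=p_N\circ f$, and let $\omega_N\in H^n(N;\Q)$, $\omega_V\in H^m(V;\Q)$ and $\omega_P\in H^{n+m}(P;\Q)$ denote the fundamental cohomology classes. Then $g^*\omega_N=f^*(\omega_N\otimes 1)$ does not vanish in $H^n(P;\Q)$: otherwise cupping with $f^*(1\otimes\omega_V)$ would force $f^*(\omega_N\otimes\omega_V)=\deg(f)\cdot\omega_P$ to vanish as well, which is absurd. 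So it is enough to prove the following: if $P=X_1\times\cdots\times X_s$ is a product of manifolds with $\dim X_j<n$ for all $j$, $N$ is an $n$-manifold, and $g\colon P\to N$ is a map with $\alpha:=g^*\omega_N\neq 0$ in $H^n(P;\Q)$, then $N$ is dominated by a product; this contradicts the hypothesis on $N$.

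Next I would decompose $\alpha$ along the Künneth isomorphism $H^*(P;\Q)\cong\bigotimes_j H^*(X_j;\Q)$, writing $\alpha=\sum_{\underline a}\alpha_{\underline a}$ over multidegrees $\underline a=(a_1,\dots,a_s)$ with $\sum_j a_j=n$ and $0\le a_j\le\dim X_j$, where $\alpha_{\underline a}\in\bigotimes_j H^{a_j}(X_j;\Q)$, and fix one $\underline a$ with $\alpha_{\underline a}\neq 0$. Since $\sum_j a_j=n\ge 1$ while each $a_j\le\dim X_j<n$, at least two of the $a_j$ are positive; put $I=\{j:a_j>0\}$, so $|I|\ge 2$. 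The pairing $\bigotimes_j H^{a_j}(X_j;\Q)\otimes\bigotimes_j H^{\dim X_j-a_j}(X_j;\Q)\to\Q$ given by $(x,y)\mapsto\inangles{x\cup y,[P]}$ is perfect, being the tensor product of the Poincar\'e duality pairings of the $X_j$, and its right-hand factor is spanned by elementary tensors; hence there are classes $\gamma_j\in H^{\dim X_j-a_j}(X_j;\Q)$, necessarily all non-zero, with $\inangles{\alpha_{\underline a}\cup(\gamma_1\otimes\cdots\otimes\gamma_s),[P]}\neq 0$. The remaining Künneth components $\alpha_{\underline a'}$ with $\underline a'\neq\underline a$ contribute nothing, because then $a'_j>a_j$ for some $j$ and so $\alpha_{\underline a'}\cup(\gamma_1\otimes\cdots\otimes\gamma_s)=0$ for degree reasons in the $j$-th factor; therefore $\inangles{\alpha\cup(\gamma_1\otimes\cdots\otimes\gamma_s),[P]}\neq 0$.

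It then remains to realize the classes $\gamma_j$ by manifolds. Let $\sigma_j\in H_{a_j}(X_j;\Q)$ be Poincar\'e dual to $\gamma_j$; the previous non-vanishing becomes $\inangles{g^*\omega_N,\sigma_1\times\cdots\times\sigma_s}\neq 0$, so $g_*(\sigma_1\times\cdots\times\sigma_s)\neq 0$ in $H_n(N;\Q)$. By Thom's theorem some non-zero rational multiple of each $\sigma_j$ is the image of the fundamental class of a closed oriented $a_j$-manifold $Z_j$ under a map $h_j\colon Z_j\to X_j$ (taking $Z_j$ a point when $a_j=0$). Then $h=h_1\times\cdots\times h_s$ carries the fundamental class of the $n$-manifold $Z_1\times\cdots\times Z_s$ (of dimension $\sum_j a_j=n$) to a non-zero multiple of $\sigma_1\times\cdots\times\sigma_s$, so $g\circ h\colon Z_1\times\cdots\times Z_s\to N$ carries it to a non-zero multiple of $[N]$ and hence has non-zero degree. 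Discarding the point factors (those with $a_j=0$) and restricting to suitable connected components of the remaining $Z_j$ — which is legitimate since degree is additive over the components of a disjoint union — produces a map of non-zero degree onto $N$ from a product of $|I|\ge 2$ connected closed oriented manifolds of positive dimension, i.e.\ from a non-trivial product, contradicting the hypothesis that $N$ is not dominated by a product. The step I expect to be the main obstacle is precisely this last one: the objects $\alpha_{\underline a}$ and $\gamma_j$ produced by the Künneth bookkeeping are merely (co)homology classes, and converting them into an honest non-zero degree map from a product of manifolds of the correct dimensions is where Thom's representability theorem enters, with some extra care needed to keep the factors connected and positive-dimensional.
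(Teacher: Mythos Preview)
Your proof is correct and follows essentially the same approach as the paper's: both compose with the projection to~$N$, use the K\"unneth decomposition together with the dimension hypothesis $\dim X_j<n$ to see that every contributing multidegree has at least two positive entries, and then invoke Thom's representability theorem to produce a non-trivial product dominating~$N$. The paper compresses all of this into a one-line appeal to the proof of Lemma~\ref{l:Thom}, working directly in homology, whereas you unpack the argument in cohomology and pass through Poincar\'e duality before applying Thom---a harmless detour that could be shortened by starting in homology from the outset.
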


\section{Proofs}

The proofs of the above theorems all use the following lemma, which is a consequence of Thom's work~\cite{Thom}
on the Steenrod problem.

\begin{lem}\label{l:Thom}
Let $N$ be an $n$-manifold that is not dominated by a product. If 
$$f\colon M_1\times M_2\longrightarrow N$$
is a continuous map, then for all~$i \in \{1,\dots, n-1\}$ the map 
$$
f_*\colon H_i(M_1;\Q)\otimes H_{n-i}(M_2;\Q)\longrightarrow H_n(N;\Q)
$$
induced by the homological cross-product and~$f$ is the zero map.
\end{lem}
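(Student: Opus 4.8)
The plan is to argue by contradiction: suppose some homogeneous class in the image of the cross-product maps to a nonzero multiple of $[N]$ under $f_*$, and manufacture from this a product that dominates $N$. Concretely, fix $i\in\{1,\dots,n-1\}$ and suppose there are classes $\alpha\in H_i(M_1;\Q)$ and $\beta\in H_{n-i}(M_2;\Q)$ with $f_*(\alpha\times\beta)\neq 0$ in $H_n(N;\Q)\cong\Q$. Clearing denominators, we may take $\alpha\in H_i(M_1;\Z)$ and $\beta\in H_{n-i}(M_2;\Z)$ (modulo torsion) so that $f_*(\alpha\times\beta)=c\,[N]$ for some nonzero $c\in\Q$.

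Next I would invoke Thom's theorem on the Steenrod realization problem: since $1\le i\le n-1$ and $1\le n-i\le n-1$, after passing to a suitable nonzero multiple there exist a closed oriented $i$-manifold $X_1$ together with a map $g_1\colon X_1\to M_1$ with $(g_1)_*[X_1]=k_1\alpha$, and a closed oriented $(n-i)$-manifold $X_2$ with a map $g_2\colon X_2\to M_2$ and $(g_2)_*[X_2]=k_2\beta$, for some nonzero integers $k_1,k_2$. (In low dimensions one may alternatively cite the classical facts that integral homology classes in degrees $\le 3$ are realized by manifolds, but Thom's result handles all degrees after multiplication.) The key point is that $X_1\times X_2$ is then a closed oriented $n$-manifold — and it is a genuine, non-trivial product since $\dim X_1=i\ge 1$ and $\dim X_2=n-i\ge 1$.

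Now consider the composition
$$
X_1\times X_2\xrightarrow{\ g_1\times g_2\ } M_1\times M_2\xrightarrow{\ f\ } N .
$$
Its effect on fundamental classes is, by naturality of the homological cross-product and the multiplicativity of the Künneth isomorphism,
$$
(f\circ(g_1\times g_2))_*[X_1\times X_2]
= f_*\inparens{(g_1)_*[X_1]\times (g_2)_*[X_2]}
= k_1 k_2\, f_*(\alpha\times\beta)
= k_1 k_2 c\,[N],
$$
which is a nonzero multiple of $[N]$. Hence $f\circ(g_1\times g_2)$ has non-zero degree, so the product $X_1\times X_2$ dominates $N$, contradicting the hypothesis that $N$ is not dominated by a product. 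Therefore $f_*(\alpha\times\beta)=0$ for all such $\alpha,\beta$, and since these classes span the image of the cross-product, the induced map on $H_i(M_1;\Q)\otimes H_{n-i}(M_2;\Q)$ is zero.

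The main obstacle is the realization step: Thom's theorem only guarantees that a \emph{nonzero multiple} of an integral homology class is represented by a manifold, so one must be careful to track the multiples and confirm they stay nonzero (which they do, since we are in characteristic zero and $c\neq 0$). One should also make sure the orientations are chosen consistently so that $(g_j)_*[X_j]$ equals the prescribed multiple of $\alpha$ (resp.\ $\beta$) on the nose rather than up to sign — harmless here, since we only need the final degree to be nonzero. Everything else is formal naturality of the cross-product.
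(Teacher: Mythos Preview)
Your proof is correct and follows essentially the same approach as the paper: reduce by linearity to decomposable classes, clear denominators to get integral classes, invoke Thom's realization theorem to represent nonzero multiples of $\alpha$ and $\beta$ by manifolds $X_1$, $X_2$, and then observe that $f\circ(g_1\times g_2)$ would have nonzero degree, contradicting the hypothesis on~$N$. The only cosmetic difference is that the paper absorbs the Thom multiples into $\alpha$ and $\beta$ at the outset rather than tracking $k_1,k_2$ explicitly, and phrases the argument directly rather than by contradiction.
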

\begin{proof}
Because elements of $H_i(M_1;\Q)\otimes H_{n-i}(M_2;\Q)$ are finite linear combinations of decomposable elements, 
and $f_*$ is linear, it suffices to show $f_*(\alpha\otimes\beta )=0$ for all~$\alpha \in H_i(M_1;\Q)$ and all~$\beta \in H_{n-i}(M_2;\Q)$. 
Again by the linearity of $f_*$, there is no loss of generality in replacing $\alpha$ and $\beta$ by non-zero multiples. 
Thus we may assume that these are integral homology classes. By Thom's result~\cite{Thom}, after replacing the integral classes
$\alpha$ and $\beta$ by suitable non-zero multiples, there are continuous maps $g_j\colon X_j\longrightarrow M_j$ 
defined on manifolds $X_j$ of dimensions $i$ and $n-i$ respectively, such that $(g_1)_*[X_1]=\alpha$ and 
$(g_2)_*[X_2]=\beta$. It follows that 
$$
f_*(\alpha\otimes\beta ) = \bigl(f\circ (g_1\times g_2)\bigr)_*[X_1\times X_2] \ .
$$
This must vanish, because otherwise the map $f\circ (g_1\times g_2)\colon X_1\times X_2\longrightarrow N$
would have non-zero degree, contradicting the assumption on~$N$.
\end{proof}

Using Lemma~\ref{l:Thom}, we now prove the theorems stated in the previous section. 

\begin{proof}[Proof of Theorem~\ref{t:main1}]
If $M\geq N$, then clearly $M^{\times d}\geq N^{\times d}$ for all $d\geq 2$. Conversely, suppose that $g\colon M^{\times d}\longrightarrow N^{\times d}$
has non-zero degree for some~$d\geq 2$. We consider the composition $f=p_1\circ g$, where $p_1$ is the projection to the first factor. Then
$f_*$ is surjective in rational homology. Since we assumed that $N$ is not dominated by a product, Lemma~\ref{l:Thom}
tells us that, in degree $n$, the map $f_*$ vanishes on tensor products of homology vector spaces of non-zero degree. 
It follows that for at least one of the inclusions $i\colon M\longrightarrow M^{\times d}$ of a factor of $M^{\times d}$, the composition 
$f\circ i$ has non-zero degree, and thus~$M\geq N$.
\end{proof}

\begin{proof}[Proof of Theorem~\ref{t:main2}]
If $M\geq N$, then clearly $M\times W\geq N\times W$ for all manifolds~$W$. 
Conversely, suppose that $f\colon M\times W\longrightarrow N\times W$ has non-zero degree for some $W$. 
We consider the induced map $f_*$ on~$H_n(\;\cdot\;;\Q)$ in terms of the K\"unneth decompositions of the 
domain and of the target:
$$
f_*\colon H_n(M;\Q)\oplus \mathcal{M}_1 \oplus H_n(W;\Q)\longrightarrow H_n(N;\Q)\oplus \mathcal{M}_2 \oplus H_n(W;\Q) \ ,
$$
where $\mathcal{M}_i$ denotes the direct sum of tensor products of homology vector spaces in non-zero
degrees.

Since we assumed that $N$ is not dominated by a product, Lemma~\ref{l:Thom} tells us that $f_*(\mathcal{M}_1)$ is 
contained in $\mathcal{M}_2 \oplus H_n(W;\Q)$. If we assume for a contradiction that $M\ngeq N$, then the same is true for 
$f_*(H_n(M;\Q))$.

Because $f_*$ is surjective, we conclude that there is an $\alpha_0\in H_n(W;\Q)$ such that $f_*(\alpha_0) = [N]\neq 0$
holds in the quotient vector space 
\[ Q=H_n(N\times W;\Q)/f_*(H_n(M;\Q)\oplus\mathcal{M}_1) \ .
\] 
Note that $Q$ is of finite, non-zero, dimension.

Now we think of $\alpha_0$ as being in the target of $f_*$. By surjectivity of $f_*$, the class $\alpha_0$ is in its image,
so there exists an $\alpha_1\in H_n(W;\Q)$ satisfying $f_*(\alpha_1)=\alpha_0$ in $Q$ (though not necessarily in 
$H_n(N\times W;\Q)$).
We proceed inductively to find $\alpha_{i+1}\in H_n(W;\Q)$ with the property that $f_*(\alpha_{i+1})=\alpha_i$ in $ Q$.
The assumptions that $N$ is not dominated by a product, or by $M$, imply at every step that $\alpha_i$ 
does not vanish in the quotient $Q$.

Since $Q$ is finite-dimensional, there is a minimal~$k \in \bN$ such that $\alpha_0,\ldots,\alpha_k$ are linearly 
dependent in $Q$. There are then $\lambda_i\in\Q$ with $\lambda_k\neq 0$ such that
$$
\lambda_k\alpha_k+\ldots +\lambda_0\alpha_0=0 \in Q \ .
$$
We now take the left-hand-side of this equation, considered as an element of $H_n(W;\Q)\subset H_n(M\times W;\Q)$, and apply
$f_*$ to it to obtain
$$
\lambda_k\alpha_{k-1}+\ldots +\lambda_1\alpha_0+\lambda_0 [N]\in f_*(H_n(M;\Q)\oplus\mathcal{M}_1) \ .
$$
If $\lambda_0=0$, then this contradicts the minimality of $k$. If $\lambda_0\neq 0$, then we reach the 
conclusion that in $H_n(N\times W;\Q)$ the generator $[N]\in H_n(N;\Q)$ is a linear combination of 
$\lambda_k\alpha_{k-1}+\ldots +\lambda_1\alpha_0\in H_n(W;\Q)$ and of elements in 
$$
f_*\bigl(H_n(M;\Q)\oplus\mathcal{M}_1\bigr)\subset \mathcal{M}_2 \oplus H_n(W;\Q) \ .
$$
This contradicts the K\"unneth decomposition, and hence proves $M\geq N$.
\end{proof}

\begin{proof}[Proof of Theorem~\ref{t:finiteprod}]
Suppose 
$
g\colon X_1\times\ldots\times X_s\longrightarrow N\times V
$
is a continuous map, and consider the composition $f=p_1\circ g$. The assumptions that $N$ is not dominated 
by a product and that $\dim X_j<n$ for all $j$ imply, as in the proof of Lemma~\ref{l:Thom}, that $f_*$ is the zero map 
in degree $n$. Therefore, $g$ has degree zero.
\end{proof}

\section{Discussion}

\subsection{Applications of the cuplength}
It is not clear to what extent the assumption that $N$ is not dominated by a product is necessary in the above theorems. 
While it is crucial for our proofs, this could be an artefact of our method.
Indeed, there are cases of targets $N$ which are dominated by products, and still one can prove our results for them. 
We now do this for tori, using the cuplength. 

Recall that the cuplength of $M$, denoted $\cl (M)$, is the maximal number $k$ for which there are classes 
$\alpha_1,\ldots,\alpha_k\in H^*(M;\Q)$ of positive degrees with the property that 
$ \alpha_1\cup\ldots\cup\alpha_k\neq 0\in H^*(M;\Q)$. This is monotone under maps of non-zero degree 
by~\cite{Hopf}\footnote{Hopf did not use cohomology, but formulated the conclusion in terms of the {\it Umkehr} map 
on intersection rings.}. The compatibility of the K\"unneth decomposition with the cup product implies
\begin{equation}\label{eq:cl}
\cl (M\times W) = \cl (M) +\cl (W) \ .
\end{equation}

The following is easy and well known.
\begin{lem}\label{l:cl}
An $n$-manifold $M$ dominates $T^n$ if and only if there is an injective algebra homomorphism $H^*(T^n;\Q)\longrightarrow H^*(M;\Q)$,
equivalently, if $\cl (M)=n$.
\end{lem}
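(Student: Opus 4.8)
The plan is to prove Lemma~\ref{l:cl} by chaining three implications: $M \geq T^n$ implies the existence of an injective algebra homomorphism $H^*(T^n;\Q) \to H^*(M;\Q)$, which implies $\cl(M) = n$, which in turn implies $M \geq T^n$. The first implication is exactly Hopf's theorem, already quoted in the excerpt: a map of non-zero degree pulls back rational cohomology injectively as an algebra map. So nothing new is needed there.

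For the middle implication, I would observe that $H^*(T^n;\Q)$ is the exterior algebra on $n$ generators in degree $1$, so it contains a product of $n$ positive-degree classes that is nonzero (the top class). An injective algebra homomorphism carries this to a nonzero product of $n$ positive-degree classes in $H^*(M;\Q)$, giving $\cl(M) \geq n$. The reverse inequality $\cl(M) \leq n$ is automatic since $M$ is an $n$-manifold: any cup product of more than $n$ positive-degree classes lands in $H^{>n}(M;\Q) = 0$. Hence $\cl(M) = n$.

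The only implication requiring actual construction is the last one: $\cl(M) = n \Rightarrow M \geq T^n$. Here I would pick classes $\alpha_1,\dots,\alpha_n \in H^*(M;\Q)$ of positive degree with $\alpha_1 \cup \cdots \cup \alpha_n \neq 0$; since their degrees are positive and sum to at most $n = \dim M$ (as the product is nonzero), and there are $n$ of them, each $\alpha_i$ must have degree exactly $1$. Thus we get $n$ classes $\alpha_1,\dots,\alpha_n \in H^1(M;\Q)$ with nonzero product, hence with nonzero image under the top-degree pairing. These classes define a map $M \to K(\Q^n,1)$, and after clearing denominators we may take them integral and primitive, yielding a map $h\colon M \to T^n$ classified by $(\alpha_1,\dots,\alpha_n) \in H^1(M;\Z)^n$. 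By naturality, $h^*$ of the generators of $H^1(T^n;\Q)$ are the $\alpha_i$, so $h^*$ carries the fundamental cohomology class of $T^n$ to $\alpha_1 \cup \cdots \cup \alpha_n \neq 0$, which forces $\deg h \neq 0$.

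The main obstacle is purely bookkeeping rather than conceptual: one must be careful that the degree-one classes can be realized by an honest map to a torus (not merely to a rational space), which is handled by replacing the $\alpha_i$ by integral multiples and invoking that $T^n = K(\Z^n,1)$ represents $H^1(\,\cdot\,;\Z)^n$; and one must note that multiplying the $\alpha_i$ by nonzero integers only multiplies $\deg h$ by a nonzero integer, so nonvanishing of the degree is preserved. I expect the write-up to be short, citing~\cite{Hopf} for the first step and using the dimension bound on $M$ for both the cuplength computation and the degree-one conclusion.
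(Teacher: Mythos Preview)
The paper does not actually prove this lemma; it introduces it as ``easy and well known'' and gives no argument. Your proposal is correct and is exactly the standard proof one would supply: Hopf's injectivity for the forward direction, a dimension count showing that maximal cuplength forces $n$ classes of degree one with nonzero cup product, and the identification $T^n \simeq K(\Z^n,1)$ to convert those classes (after clearing denominators) into a map $M\to T^n$ of nonzero degree.
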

So this is a case where the algebraic necessary condition for domination derived from rational cohomology is also sufficient.

Lemma~\ref{l:cl} combined with~\eqref{eq:cl} tells us that Theorem~\ref{t:main2} holds for $N=T^n$. Furthermore, we have:
\begin{prop}
If $M_1$ and $M_2$ are manifolds of dimensions $m_1$ and $m_2$ respectively, then $M_1\times M_2\geq T^{m_1+m_2}$ if and 
only if $M_1\geq T^{m_1}$ and $M_2\geq T^{m_2}$.
\end{prop}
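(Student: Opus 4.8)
The plan is to reduce the statement to the cuplength, using Lemma~\ref{l:cl} together with the additivity formula~\eqref{eq:cl}. The forward implication needs essentially no work: if $g_i\colon M_i\longrightarrow T^{m_i}$ has non-zero degree for $i\in\{1,2\}$, then $g_1\times g_2\colon M_1\times M_2\longrightarrow T^{m_1}\times T^{m_2}=T^{m_1+m_2}$ has degree $\deg(g_1)\cdot\deg(g_2)\neq 0$, so $M_1\times M_2\geq T^{m_1+m_2}$.

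For the converse, I would argue as follows. Suppose $M_1\times M_2\geq T^{m_1+m_2}$. Since $M_1\times M_2$ is an $(m_1+m_2)$-manifold, Lemma~\ref{l:cl} gives $\cl(M_1\times M_2)=m_1+m_2$, and then~\eqref{eq:cl} yields $\cl(M_1)+\cl(M_2)=m_1+m_2$. On the other hand, for a closed $m_i$-manifold $M_i$ any cup product of more than $m_i$ classes of positive degree lies in $H^{>m_i}(M_i;\Q)=0$, so $\cl(M_i)\leq m_i$. Combining these inequalities with the additivity relation forces $\cl(M_i)=m_i$ for both $i$, and a second application of Lemma~\ref{l:cl}, this time in dimension $m_i$, gives $M_i\geq T^{m_i}$.

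I do not expect a genuine obstacle here: the whole argument is the observation that the cuplength is monotone and additive, and is maximal precisely for targets dominating a torus of the same dimension, so the total "budget" $m_1+m_2$ of cuplength available in $M_1\times M_2$ must be split as $m_1+m_2$ with each summand attaining its individual maximum. The only point worth spelling out is the elementary dimension bound $\cl(M_i)\leq m_i$, which is what pins down this splitting as the only one possible.
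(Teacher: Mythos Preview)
Your proof is correct and follows exactly the approach the paper intends: the proposition is stated immediately after Lemma~\ref{l:cl} and~\eqref{eq:cl} precisely because it is their direct consequence, and you have spelled out those details accurately. The only extra ingredient you needed, the trivial dimension bound $\cl(M_i)\leq m_i$, is indeed what forces the additive equality to split factorwise.
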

In particular, Theorem~\ref{t:main1} also holds for $N=T^n$.

\subsection{Infinite products}
Gromov has suggested that some non-domination results should extend to infinite products, following his perspective on
infinite products and related topics~\cite{Ber,Gro,BG}\cite[Section 5]{2000}.

By increasing the number $d$ of factors in $P^{\times d}$, one would naively end up with a countably infinite product $P^{\times \infty}$, without
any extra structure. A better way of looking at infinite products is probably to pick a (discrete, countable) group $\Gamma$, and to look
at the space~$P^{\Gamma}=\Map (\Gamma,P)$, equipped with the natural shift action of~$\Gamma$. 
Now in formulating what $P^{\Gamma}\ngeq N^{\Gamma}$
might mean, one should only consider $\Gamma$-equivariant continuous maps between these product spaces.

The main issue is of course that for maps between these
infinite-di\-men\-sional manifolds there is no naive, geometric, notion
of degree. Instead, one should make full use of equivariance and
define domination via surjectivity in a suitable homology theory,
perhaps without necessarily attempting to define a degree.

\bigskip

\bigskip

\end{document}